\documentclass[11pt,reqno]{amsart}
\usepackage[body={7in,9in},centering]{geometry}
\usepackage{amsmath,amssymb,amsthm,mathrsfs}
\usepackage{color,xcolor}
\definecolor{cobalt}{RGB}{61,99,181}
\usepackage[colorlinks,citecolor=cobalt,linkcolor=cobalt]{hyperref}
\usepackage{cite}
\usepackage{float}
\usepackage{exscale}
\usepackage{relsize}
\usepackage{graphicx}
\usepackage{tikz}

\newtheorem{thm}{Theorem}[section]

\newtheorem{lem}[thm]{Lemma}

\numberwithin{equation}{section}

\date{\today}

\usepackage{etoolbox}

\makeatletter

\newcommand{\Rmnum}[1]{\expandafter\@slowromancap\romannumeral #1@}
\patchcmd{\thebibliography}{\section*{\refname}}{\section*{References}}{}{}
\patchcmd{\thebibliography}{\MakeUppercase\refname}{References}{}{}
\makeatother

\def\D{\mathbb{D}}
\def\T{\mathbb{T}}

\newcommand{\C}{\mathbb{C}}

\begin{document}

\title[The spectral structure of Toeplitz operators on $L_h^2$]{The spectral structure of Toeplitz operators with polynomial symbols on the harmonic Bergman space}

\author[Huang]{Mingmei Huang}
\address{
College of Mathematics and Statistics, Chongqing University, Chongqing, 401331, P. R. China}
\email{huangmm0909@163.com}

\author[Wang]{Jiawei Wang}
\address{School of Mathematical Sciences, Inner Mongolia University, Hohhot, 010021, P. R. China}
\email{jiaweiwang1516@163.com}

\author[Zhao]{Xianfeng Zhao}
\address{
College of Mathematics and Statistics, Chongqing University, Chongqing, 401331, P. R. China}
\email{xianfengzhao@cqu.edu.cn}

\keywords{Toeplitz operator; harmonic Bergman space; spectrum; Weyl's theorem.}
	
\subjclass[2010]{47B35, 47A10}

\begin{abstract}
Using the theory of symmetric polynomials, we prove that the spectrum of every Toeplitz operator with an analytic polynomial symbol on the harmonic Bergman space can be represented as the union of a closed curve and a finite set of at most $(n-1)2^{n-1}$ isolated eigenvalues, where $n$ is the degree of the symbol. This implies that Weyl's theorem holds for such operators.
\end{abstract}
\maketitle
	

\section{Introduction}\label{Intro}
	Let $dA = \frac{1}{\pi}dxdy$ denote the normalized area measure on the open unit disk $\mathbb{D}$, and let $L^2(\mathbb{D}, dA)$ be the Hilbert space of square-integrable functions on $\mathbb{D}$ with respect to $dA$. The Bergman space $L_a^2$ is the closed subspace of $L^2(\mathbb D,dA)$ consisting of analytic functions. It is well known that the reproducing kernel (at $w\in \mathbb D$) for the Bergman space $L_a^2$ is given by
	\[
	K_w(z) = \frac{1}{(1-\overline{w}z)^2}, \ \ \ \ z \in \mathbb{D}.
	\]
	The harmonic Bergman space $L_h^2$ is defined as the closed subspace of $L^2(\mathbb{D}, dA)$ consisting of  complex-valued harmonic functions on $\D$. Clearly, we can decompose the harmonic Bergman space as $$L_h^2=zL_a^2\oplus \overline{L_a^2},$$
	where $\overline{L_a^2}$ denotes  the complex conjugate of $L_a^2$. Therefore,
	$$\big\{\sqrt{n+1}z^n: n=1,2,\cdots\big\}\cup\big\{\sqrt{n+1}\overline z^{n}:n=0, 1, 2,\cdots\big\}$$
	forms an orthonormal basis for $L_h^2$. The harmonic Bergman space is also a reproducing kernel Hilbert space and the reproducing kernel is given by
	$$R_w(z)=\frac{1}{(1-\overline{w}z)^2}+\frac{1}{(1-w\overline{z})^2}-1, \ \ \ \  z\in \mathbb D.$$
	
	Let $P$ and $P_h$ be the orthogonal projections from $L^2(\D,dA)$ onto $L_a^2$ and $L_h^2$, respectively. Let $L^\infty(\mathbb D)= L^\infty(\mathbb D, dA)$ denote the Banach algebra of essentially bounded functions on $\mathbb D$ with respect to the measure $dA$.
For $\varphi\in L^\infty(\D)$ , the Toeplitz operator $T_{\varphi}$ with symbol $\varphi$ on $L_a^2$ is defined by
	\[
	T_{\varphi}f=P(\varphi f),\qquad f\in L_a^2.
	\]
	Similarly, the Toeplitz operator $\mathbb T_\varphi$ on $L_h^2$ with symbol $\varphi$ is defined by
	\[
	\mathbb T_\varphi u=P_h(\varphi u),\qquad u\in L_h^2.
	\]
	For further information about the Bergman and harmonic Bergman spaces, and Toeplitz operators on these two function spaces, the reader is referred to \cite{Zhu}, \cite{CL} and \cite{GZ}.

The purpose of this paper is to investigate the spectral structure of Toeplitz operators with general analytic polynomial symbols on the harmonic Bergman space, as well as the Weyl spectrum and Weyl's theorem for such operators. The spectral theory of Toeplitz operators on various function spaces has attracted considerable attention. According to the well-known Widom theorem, the spectrum of a Toeplitz operator on the Hardy space is always connected (see \cite[Corollary 7.46]{Dou}). However, the spectral structure  of Toeplitz operators on the Bergman space exhibits  peculiar phenomena: even for harmonic polynomial symbols, the spectrum of a Toeplitz operator may be disconnected (see \cite[Theorem 4.1]{GZZ}). As for Toeplitz operators on the harmonic Bergman space, their spectral properties turn out to be even more unexpected: there exists a Toeplitz operator with a certain quadratic analytic polynomial as its symbol whose spectrum is disconnected (see \cite[Proposition 3.3]{WangZhao}). In Section \ref{S3}, we investigate the topological structure of the spectra of Toeplitz operators with polynomial symbols on the harmonic Bergman space and discuss Weyl's theorem for such operators. To this end, we first recall the definition of the Weyl spectrum and review the relevant literature.

The Weyl spectrum plays an important role in the study of the spectral theory of linear operators.  For a bounded linear operator $T$ on a complex Hilbert space $\mathcal{H}$, the Weyl spectrum $\omega(T)$ of $T$ is defined by
	\[
	\omega(T):=\bigcap_{K\ \text{\ compact}}\sigma(T+K),
	\]
	where the intersection is taken over all compact operators $K$ on $\mathcal{H}$, and $\sigma(T+K)$ denotes the spectrum of $T+K$. In \cite{S}, Schechter provided an equivalent formulation of the Weyl spectrum, showing that
	\begin{align*}
		\omega(T) = \sigma_{\text{e}}(T) \cup \{\lambda \in \mathbb{C}: \lambda\notin\sigma_{\text{e}}(T)\  \ \text{and}\ \ \text{index}(T - \lambda I) \neq 0\},
	\end{align*}
	where $\sigma_{\mathrm{e}}(T)$ denotes the essential spectrum of $T$ and $\operatorname{index}(T - \lambda I)$ stands for the Fredholm index of the operator $T-\lambda I$.
	
The systematic study of compact perturbations of linear operators dates back to  Weyl \cite{W}, who proved that for a self-adjoint operator $T$ on a Hilbert space, a spectral point $\lambda$ belongs to $\sigma(T+K)$ for every compact operator $K$ if and only if $\lambda$ is not an isolated eigenvalue of finite geometric multiplicity of $T$.
Extending the classical result of Weyl \cite{W} for normal operators, Coburn \cite{C} showed that if \(T\) is any hyponormal operator or Toeplitz operator on the Hardy space, then
\begin{equation}\label{WT}
\omega(T) = \sigma(T)\backslash\pi_{00}(T),
\end{equation}
where $\pi_{00}(T)$ denotes the set of isolated eigenvalues of $T$ of finite geometric multiplicity. In general, if a bounded linear operator \(T\) satisfies (\ref{WT}), one says that Weyl's theorem holds for \(T\).
In 1968, Istr\u{a}\c{t}escu \cite{Ist} showed that Weyl's theorem holds for a certain class of operators that are not necessarily hyponormal.
Later, Berberian \cite{B, Ber} proved that Weyl's theorem holds for a class of operators that includes both the hyponormal operators and the class of operators considered by Istr\u{a}\c{t}escu \cite{Ist}. These works gave rise to studies on  Weyl's theorem for algebraically hyponormal operators \cite{HL}, algebraically paranormal operators \cite{HC}, Banach-space operators with the single-valued extension property \cite{CH}, paranormal operators \cite{UA}, algebraically quasi-class A operators \cite{AH}, $(2\times 2)$ operator matrices \cite{Cao},
and quasi-2-expansive operators \cite{SJ}.

However, apart from Toeplitz operators on the Hardy space and weighted shift operators, most scholars have only considered Weyl's theorem for abstract operators. In the setting of the Bergman space, Guo, Zhao and Zheng  proved that the Toeplitz operator with the harmonic symbol $\overline{z}+q$  satisfies Weyl's theorem whenever $q$ belongs to the disk algebra; see \cite[Theorem 5.2]{GZZ}. In contrast, in the case of the harmonic Bergman space, Wang and Zhao  established  that Weyl's theorem holds for the Toeplitz operator $\mathbb T_p$ when $p$ is an analytic polynomial of degree at most $3$; see \cite[Theorem 3.1]{WangZhao}.  The case of general analytic polynomials was left open.
	
In the present paper, using the theory of symmetric polynomials, we prove that for any analytic polynomial $p$, the spectrum of the Toeplitz operator with symbol $p$ on the harmonic Bergman space can be written as the union of a closed curve and finitely many isolated eigenvalues. This yields that such Toeplitz operators satisfy Weyl's theorem. It is worth noting that this result improves Theorem 3.1 of \cite{WangZhao}
in an essential way and is proved using a different technique; see
Theorem \ref{thm:main} in Section \ref{S3} for details.

	\section{Preliminaries}\label{S2}
	
In this section, we recall some notation and  two known results concerning Toeplitz operators on the harmonic Bergman space, which will be used later. The first one, due to Guo and Zheng, is particularly useful for studying Toeplitz operators with continuous symbols on the harmonic Bergman space $L_h^2$.
	
	\begin{thm}\emph{(\hspace{0.01em}\cite[Theorem 3.4]{GZ})}\label{lemGZ}
	Let $\varphi\in C(\overline{\D})$. Then the essential spectrum of $\T_\varphi$ is given by $\sigma_{\mathrm{e}}(\T_\varphi)=\varphi(\partial \D)$, where $\partial \mathbb D$ denotes the unit circle.  Moreover, if  $\T_\varphi$ is a Fredholm operator on $L_h^2$, then $\mathrm{index}(\T_\varphi)=0$.
	\end{thm}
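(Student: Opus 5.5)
The plan is to compare $\T_\varphi$ with a multiplication-type operator built from the boundary values of $\varphi$, using the compactness of small Hankel-type pieces for continuous symbols. First I would treat a polynomial symbol $\varphi(z)=\sum a_{jk}z^j\overline z^k$. Using the decomposition $L_h^2=zL_a^2\oplus\overline{L_a^2}$, I would write $\T_\varphi$ as a $2\times 2$ operator matrix. Its diagonal entries are compressions of Bergman Toeplitz operators, namely $T_\varphi$ restricted to $zL_a^2$ and the conjugate of $T_{\overline\varphi}$ on $L_a^2$. On the monomial basis, each off-diagonal entry maps $z^m$ to multiples of $\overline z^{\,l}$ (or conversely) only when $m+l$ is bounded by $\deg\varphi$. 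These entries are therefore finite rank. By density of polynomials in $C(\overline\D)$, together with $\|\T_\psi\|\le\|\psi\|_\infty$, every $\varphi\in C(\overline\D)$ gives a compact off-diagonal part. Hence $\T_\varphi$ is congruent modulo compacts to $T_\varphi|_{zL_a^2}\oplus \overline{T_{\overline\varphi}}$.

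Next I would invoke the known Bergman-space facts. For $\varphi\in C(\overline\D)$, $\sigma_e(T_\varphi)=\varphi(\partial\D)$. If $\varphi-\lambda$ does not vanish on $\partial\D$, then $\operatorname{index}(T_{\varphi}-\lambda)=-\operatorname{wind}(\varphi|_{\partial\D},\lambda)$. The compression to $zL_a^2$ differs from $T_\varphi$ by a finite-rank perturbation, and finite-codimensional corner changes leave the index unchanged. The second summand $u\mapsto \overline{T_{\overline\varphi}\,\overline u}$ is antiunitarily equivalent to $T_{\overline\varphi}$. Its essential spectrum is $\varphi(\partial\D)$ after conjugation, and its Fredholm index equals that of $T_{\overline\varphi}-\overline\lambda$, which is $-\operatorname{wind}(\overline\varphi,\overline\lambda)=+\operatorname{wind}(\varphi,\lambda)$, since conjugating an antilinear equivalence preserves the dimensions of the kernel and cokernel. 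The essential spectrum of the direct sum is the union $\varphi(\partial\D)$, and the index is the sum of the two indices, which is $0$.

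The main obstacle is the index bookkeeping for the antilinear conjugated summand. I would verify it directly: $\ker(\overline{T_{\overline\varphi}}-\lambda)=\overline{\ker(T_{\overline\varphi}-\overline\lambda)}$, and similarly for the cokernel. This gives the sign flip of the winding number, which is exactly what cancels the index of the first summand.
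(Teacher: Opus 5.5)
The paper gives no proof of this statement. It is quoted from Guo and Zheng and used as a black box, so there is no in-paper argument to compare against. Your proof is correct as it stands.

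Checking the steps relative to $zL_a^2\oplus\overline{L_a^2}$, for a monomial symbol $z^j\overline z^{\,k}$:
\begin{itemize}
\item The lower-left block annihilates $z^m$ once $m>k$, and the upper-right block annihilates $\overline z^{\,m}$ once $m\geqslant j$. So both are finite rank for polynomials in $z,\overline z$, and compact for $\varphi\in C(\overline\D)$ by Stone--Weierstrass and the bound $\|\T_\psi\|\leqslant\|\psi\|_\infty$.
\item The upper-left block should be read as the compression $(P-P_0)T_\varphi|_{zL_a^2}$, with $P_0$ the projection onto constants, rather than a restriction, since $T_\varphi$ need not map $zL_a^2$ into itself. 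You do treat it that way later, and it has the same essential spectrum and index as $T_\varphi$.
\item The lower-right block $B=CT_{\overline\varphi}C$, with $C$ complex conjugation, satisfies $B-\lambda=C(T_{\overline\varphi}-\overline\lambda)C$. This gives index $+\operatorname{wind}(\varphi,\lambda)$, which cancels the $-\operatorname{wind}(\varphi,\lambda)$ from the first block.
\end{itemize}
For $\varphi$ in the disc algebra this agrees with Lemma~\ref{lemWZ}, whose corner $\Gamma_\varphi-(1\otimes\varphi^*)$ is then compact.

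Your route gives an explicit description of $\T_\varphi$ modulo compacts as a direct sum of Bergman-space Toeplitz operators, and that is what identifies $\sigma_{\mathrm e}(\T_\varphi)=\varphi(\partial\D)$.

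For the index statement alone, there is a much shorter argument that needs neither continuity nor winding numbers. The map $Ju=\overline u$ is antiunitary on $L_h^2$ and commutes with $P_h$, so $J\T_\varphi J=\T_{\overline\varphi}=\T_\varphi^*$. Hence $\dim\ker\T_\varphi^*=\dim\ker\T_\varphi$, and every Fredholm $\T_\varphi$ with $\varphi\in L^\infty(\D)$ has index $0$. Your antiunitary bookkeeping on the $\overline{L_a^2}$ summand is a block-level instance of this symmetry; continuity of $\varphi$ is needed only for the essential-spectrum part.
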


We denote by $H^\infty$  the Banach algebra of bounded analytic functions on the disk $\mathbb D$.
For $f\in L^{2}(\D, dA)$, we define $$\widehat{f}(z)=f(\bar{z})\ \ \
 \text{and}\ \ \ \ f^*(z)=\overline{f(\bar{z})}.$$
	Let $U:L^2(\D, dA)\to L^2(\D, dA)$ be the unitary operator defined as follows:  $$Uf(z)=f(\overline{z})=\widehat{f}(z),\ \ \ \ z\in \mathbb D.$$
	Define the unitary operator $$W: L_h^2\longrightarrow zL_a^2\oplus L_a^2$$ by $$W=\begin{bmatrix}
		I & O \\
		O & U
	\end{bmatrix}.$$
	
	With the above notation, Wang and Zhao  established the following matrix representation of $W\mathbb T_\varphi W^*$ with $\varphi\in H^\infty$, which will be needed in the proof of our main result.
	\begin{lem}\label{lemWZ}\emph{(\hspace{0.01em}\cite[Lemma 2.2]{WangZhao})}
		 Let $\varphi\in H^{\infty}$. On the space $zL_a^2\oplus L_a^2$, we have $$W\mathbb T_\varphi W^*=\begin{bmatrix}
			T_\varphi & \Gamma_\varphi-(1\otimes\varphi^*) \vspace{2mm}\\
			{O} & T_{\widehat{\varphi}}
		\end{bmatrix},$$
		where $\Gamma_\varphi$ is the small Hankel operator on $L_a^2$ defined by $$\Gamma_\varphi (f)=P(\varphi Uf),$$ and $1\otimes\varphi^*$ is the rank-one operator on $L_a^2$.

Consequently, $ \mathbb T_\varphi$ is invertible on the harmonic Bergman space  $L_h^2$ provided that $\varphi$ is invertible in $H^\infty$.
	\end{lem}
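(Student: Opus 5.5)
The plan is to compute $W\mathbb T_\varphi W^*$ directly on a pair $(f,h)\in zL_a^2\oplus L_a^2$, using an explicit formula for $P_h$ in terms of $P$. First I would record how $W^*$ acts. For $h\in L_a^2$, the function $Uh(z)=h(\overline z)$ is anti-analytic, so $W^*(f,h)=f+Uh\in L_h^2$. Next I would express $P_h$ through $P$. Since $L_h^2=L_a^2+\overline{L_a^2}$ with intersection the constants,
$$P_h g=Pg+\overline{P\overline g}-\langle g,1\rangle .$$
Write $\overline{P}g:=\overline{P\overline g}$ for the projection onto $\overline{L_a^2}$. The key identity relating it to $P$ is $U\overline{P}U=P$, which holds because $U$ maps $\overline{L_a^2}$ onto $L_a^2$ unitarily.

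Then compute $\mathbb T_\varphi(f+Uh)=P_h(\varphi f)+P_h(\varphi Uh)$ and split it into the $zL_a^2$ and $\overline{L_a^2}$ components, placing constants in the second one.

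\emph{First term.} Since $\varphi f$ is analytic and vanishes at $0$, we have $P_h(\varphi f)=\varphi f=T_\varphi f\in zL_a^2$. This gives the $(1,1)$ entry and shows the $(2,1)$ entry is $O$.

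\emph{Second term, $zL_a^2$ component.} This component is $P(\varphi Uh)-\langle\varphi Uh,1\rangle$. Substituting $w=\overline z$ in the integral gives
$$\langle \varphi Uh,1\rangle=\int_{\D}\varphi(z)h(\overline z)\,dA(z)=\int_{\D}h(w)\varphi(\overline w)\,dA(w)=\langle h,\varphi^*\rangle .$$
Hence this component equals $\Gamma_\varphi h-(1\otimes\varphi^*)h$, which is the $(1,2)$ entry.

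\emph{Second term, $\overline{L_a^2}$ component.} This component is $\overline P(\varphi Uh)$, constants included. Applying $U$ and using $U\overline P=PU$ gives
$$U\overline P(\varphi Uh)=P\big(U(\varphi Uh)\big)=P(\widehat\varphi\, h)=T_{\widehat\varphi}h,$$
which is the $(2,2)$ entry.

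The step needing the most care is this bookkeeping of the constant term and of how $U$ intertwines $P$ and $\overline P$. The rest is direct computation.

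For the consequence, suppose $\varphi$ is invertible in $H^\infty$. Then $T_{1/\varphi}$ maps $zL_a^2$ into itself and inverts $T_\varphi$ there. Likewise $\widehat\varphi$ is invertible in $H^\infty$ with inverse $\widehat{1/\varphi}$, so $T_{\widehat\varphi}$ is invertible on $L_a^2$. An upper triangular operator matrix with invertible diagonal entries $A$ and $D$ is invertible: its inverse is
$$\begin{bmatrix}A^{-1}&-A^{-1}BD^{-1}\\ O&D^{-1}\end{bmatrix}.$$
Hence $W\mathbb T_\varphi W^*$, and therefore $\mathbb T_\varphi$, is invertible.
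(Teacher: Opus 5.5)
The paper gives no proof of this lemma; it quotes it from Wang--Zhao, so there is no in-paper argument to compare against. Your direct block computation is correct and is the natural self-contained verification. In particular:
\begin{itemize}
\item $(Pg)(0)=\langle g,1\rangle$ places $P(\varphi Uh)-\langle\varphi Uh,1\rangle$ in $zL_a^2$;
\item the substitution $w=\bar z$ gives $\langle\varphi Uh,1\rangle=\langle h,\varphi^*\rangle$;
\item $U\overline{P}U=P$ yields the $(2,2)$ entry $P(\widehat\varphi h)=T_{\widehat\varphi}h$.
\end{itemize}

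Two points need tidying.

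\textbf{The projection formula.} $P_h=P+\overline{P}-\langle\cdot,1\rangle$ does not follow merely from $L_a^2\cap\overline{L_a^2}=\C$. For general subspaces $M,N$, the sum $P_M+P_N-P_{M\cap N}$ is not a projection. What makes it work here is the orthogonal splitting $L_h^2=zL_a^2\oplus\C\oplus\bar z\,\overline{L_a^2}$. Equivalently, $P$ and $\overline{P}$ commute and $P\overline{P}$ is the projection onto the constants.

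\textbf{The invertibility step.} $\widehat\varphi(z)=\varphi(\bar z)$ is co-analytic, so it is not in $H^\infty$. The ``likewise'' argument via multiplication operators therefore does not apply to $T_{\widehat\varphi}$. The fix is one line: $\widehat\varphi=\overline{\varphi^*}$, and $\varphi^*$ is invertible in $H^\infty$ with inverse $(1/\varphi)^*$. Hence $T_{\widehat\varphi}=T_{\varphi^*}^{*}$ is invertible on $L_a^2$. Alternatively, use $T_{\bar u}T_{\bar v}=T_{\overline{uv}}$ for $u,v\in H^\infty$.

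With these two adjustments, the upper-triangular inverse you wrote finishes the proof.
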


	\section{The main result}\label{S3}
	This section is devoted to the study of spectral structure of Toeplitz operators with analytic polynomial symbols on the harmonic Bergman space $L_h^2$. The main result of this paper is stated as follows.
	\begin{thm}\label{thm:main}
		Let $p$ be an analytic polynomial of degree $n\geqslant 1$.  Then there exists a finite set
		$\Lambda_p\subset\C \backslash p(\partial\D)$ such that
		\[
		\sigma(\mathbb T_p)=p(\partial\D)\cup\Lambda_p
		\quad\text{and}\quad
		\Lambda_p=\pi_{00}(\mathbb T_p),
		\]
where $\pi_{00}(\mathbb T_p)$ denotes the set of isolated eigenvalues of $\mathbb T_p$ of finite geometric multiplicity.  Moreover, the cardinality of $\Lambda_p$ is at most $(n-1)2^{n-1}$.

As a consequence,
		 Weyl's theorem holds for the Toeplitz operator $\mathbb T_p$, i.e.,
		\[
		\omega(\mathbb T_p)=\sigma(\mathbb T_p) \backslash\pi_{00}(\mathbb T_p).
		\]
	\end{thm}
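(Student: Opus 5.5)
Throughout, fix $\lambda\notin p(\partial\D)$ and set $q=p-\lambda$, so that $\T_p-\lambda I=\T_q$. By Theorem \ref{lemGZ}, $\T_q$ is Fredholm of index $0$ and $\sigma_{\mathrm e}(\T_p)=p(\partial\D)$. Hence $\lambda\in\sigma(\T_p)$ if and only if $\ker\T_q\neq\{0\}$, and any such $\lambda$ is an eigenvalue of finite geometric multiplicity. The whole problem is therefore to show that the set
\[
\Lambda_p=\{\lambda\notin p(\partial\D):\ \ker\T_{p-\lambda}\neq\{0\}\}
\]
is finite, with at most $(n-1)2^{n-1}$ points. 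Once that is done, every point of $\Lambda_p$ is automatically an isolated point of $\sigma(\T_p)$, since $\Lambda_p$ is finite and the curve $p(\partial\D)$ is closed. If $q$ has no zeros in $\overline{\D}$, then $q$ is invertible in $H^\infty$ and Lemma \ref{lemWZ} gives $\lambda\notin\sigma(\T_p)$. So only the components of $\C\setminus p(\partial\D)$ with winding number $k\geq1$ matter. On such a component, $q$ has exactly $k$ zeros $z_1,\dots,z_k$ in $\D$, counted with multiplicity.

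\textbf{Kernel computation.} I will use the triangular form of Lemma \ref{lemWZ}. A vector $(f,g)$ lies in the kernel exactly when
\[
T_{\widehat q}\,g=0,\qquad T_q f=-\bigl(\Gamma_q-(1\otimes q^*)\bigr)g.
\]
The operator $T_q$ on $zL_a^2$ is injective, and its range consists of the functions in $zL_a^2$ vanishing at $z_1,\dots,z_k$ with multiplicity. The diagonal entry acting on $g$ is, after conjugation by $U$, a co-analytic compression. Its kernel is $k$-dimensional, spanned by (derivatives of) reproducing kernels at the $\overline{z_j}$ (or at the $z_j$, depending on the convention). Writing $g=\sum_j c_jg_j$ in this basis, a nonzero kernel vector exists if and only if the $k$ point-evaluation conditions
\[
\bigl[(\Gamma_q-1\otimes q^*)g\bigr](z_i)=0,\qquad i=1,\dots,k,
\]
together with the normalisation forced by the $zL_a^2$ component, have a nontrivial solution $c$. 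These conditions can be computed explicitly from $\Gamma_q$ applied to kernel functions. So $\lambda\in\Lambda_p$ if and only if a $k\times k$ determinant $D_k(z_1,\dots,z_k)$ vanishes, where $D_k$ is an explicit rational function of the chosen roots and the coefficients of $p$.

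\textbf{Finiteness and the bound.} The difficulty is that $z_1,\dots,z_k$ are only the roots of $p-\lambda$ lying in $\D$, not all $n$ roots. I will symmetrize over subsets. After dividing by a Vandermonde factor, $D_k$ is symmetric in its $k$ arguments. For each $k$-subset $S$ of the $n$ roots $\zeta_1(\lambda),\dots,\zeta_n(\lambda)$ of $p-\lambda$, form $D_k(S)$. The product $\prod_{|S|=k}D_k(S)$ is then symmetric in all $n$ roots. By the fundamental theorem on symmetric polynomials, it is a polynomial in the elementary symmetric functions of the roots, i.e.\ in the coefficients of $p-\lambda$, and hence a polynomial $F_k(\lambda)$. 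If $F_k\not\equiv0$, its zero set is finite, and $\Lambda_p\subset\bigcup_k\{F_k=0\}$. Tracking degrees, namely the degree of $D_k$ in each root times the number of subsets, with $\sum_k\binom nk\le 2^{n}$ and one degree lost after normalisation, should give the bound $(n-1)2^{n-1}$.

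\textbf{Main obstacle.} The step I expect to be hardest is proving $F_k\not\equiv0$, i.e.\ that $D_k$ is not identically zero as a function of the roots. My first attempt will be a degenerate configuration, for example the limit in which all chosen roots coalesce at $0$, or a large-parameter limit. In such a configuration the matrix becomes triangular with computable nonzero diagonal, using the explicit action of $\Gamma_q$ on monomials $z^m\mapsto P(q\,\overline z^{\,m})$-type terms.

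\textbf{Weyl's theorem.} By Schechter's formula, $\omega(\T_p)=\sigma_{\mathrm e}(\T_p)\cup\{\lambda:\operatorname{index}\ne0\}=p(\partial\D)$, since the index is always $0$ off $p(\partial\D)$ by Theorem \ref{lemGZ}. Conversely, points of $p(\partial\D)$ lie in $\omega(\T_p)$, so they are not in $\pi_{00}(\T_p)$. Hence $\pi_{00}(\T_p)=\Lambda_p$ and
\[
\omega(\T_p)=\sigma(\T_p)\setminus\pi_{00}(\T_p).
\]
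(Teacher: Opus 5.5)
Your architecture matches the paper's. You reduce via the Guo--Zheng theorem to eigenvalues off $p(\partial\D)$, use the triangular form, and take $g$ in the span of the kernels at the zeros in $\D$. You then impose vanishing at those zeros, symmetrize over all $N$-subsets of the $n$ roots of $p-\lambda$, and use that only $e_n$ depends on $\lambda$.

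\textbf{Main gap: nonvanishing of $F_k$.} The step you defer as the ``main obstacle'' is the heart of the proof, and neither of your suggested routes works as described. $F_k(\lambda)$ is a symmetric function of the roots restricted to the one-parameter family of roots of $p-\lambda$, with $a_1,\dots,a_n$ fixed. So it proves nothing to show that $D_k$ is nonzero as a function of free root variables, or that the symmetric polynomial is nonzero in $e_1,\dots,e_n$. A configuration with the $k$ chosen roots coalescing at $0$ is realized by some $\lambda$ only when $a_1=\cdots=a_{k-1}=0$. In the large-parameter limit the matrix is not triangular either.

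What is needed is the explicit entry $P\bigl[(p(z)-p(w))(1-w\overline z)^{-2}\bigr]=z\,h(z,w)$, with $h$ a polynomial of degree $n-1$, together with $\langle g,q^*\rangle=0$ on $\ker T_{\widehat q}$. Then for simple roots the condition is $\det[h(\alpha_i,\alpha_j)]_{i,j=1}^N=0$ with polynomial entries.

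Now take $\lambda=t^n$. By Rouch\'e, the roots $\beta_{j,t}$ of $p-t^n$ satisfy $\beta_{j,t}/t\to\omega_j$, the $n$-th roots of unity. Moreover $t^{1-n}h(\beta_{i,t},\beta_{j,t})\to M_{ij}=\sum_{\ell=0}^{n-1}b_\ell\,\omega_i^{n-1-\ell}\omega_j^{\ell}$ with $b_\ell=\frac{(\ell+1)(n-\ell+1)}{n+1}>0$. This limit matrix is full. Since $F_N$ is a product over all subsets, you need its invertibility on every $N$-subset $A$. That comes from positivity: $\sum_{i,j\in A}\overline{x}_i\,\omega_iM_{ij}x_j=\sum_\ell b_\ell\bigl|\sum_{j\in A}x_j\omega_j^\ell\bigr|^2$, which is $>0$ for $x\neq0$ by Vandermonde. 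Without this or a substitute, neither finiteness of $\Lambda_p$ nor Weyl's theorem is established.

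\textbf{The bound $(n-1)2^{n-1}$.} This is only asserted. The paper obtains it in three steps.
\begin{itemize}
\item[(i)] Treat separately the $\lambda$ for which $p-\lambda$ has a multiple root. These lie among the at most $n-1$ critical values $p(\gamma)$ with $p'(\gamma)=0$. This also avoids the confluent derivative-kernel determinants, whose symmetrization you have not justified.
\item[(ii)] Rule out $N=0$, by invertibility, and $N=n$. In the latter case $\sum_jc_jh(z,\alpha_j)$ has degree at most $n-1$ and vanishes at $n$ points, which gives a Vandermonde system forcing $c=0$.
\item[(iii)] Use $\deg F_N\le N(n-1)\binom nN$ together with the fact that $e_n$ has degree $n$ in the roots, so $\deg G_N\le(n-1)\binom{n-1}{N-1}$.
\end{itemize}
Summing over $1\le N\le n-1$ and adding $n-1$ gives exactly $(n-1)2^{n-1}$. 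Your count via $\sum_k\binom nk\le 2^n$ and an unspecified ``normalisation loss'' does not produce this.

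\textbf{A smaller slip in the Weyl paragraph.} The inference ``lies in $\omega(\mathbb T_p)$, hence not in $\pi_{00}(\mathbb T_p)$'' is not valid in general; that is one way Weyl's theorem can fail. The correct reason is that $p(\partial\D)\subset\sigma(\mathbb T_p)$ is a nondegenerate curve, so none of its points is isolated in the spectrum.
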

	
	\begin{proof}
If $p$ is a linear polynomial, then we have by \cite[Theorem 3.1]{WangZhao} that $\sigma(\mathbb T_p)=p(\partial \mathbb D)$. In this case, $\Lambda_p=\varnothing$. So, we may assume that $n\geqslant 2$ and
		\begin{equation*}
			p(z)=a_0+a_1z+a_2z^2+\cdots+a_nz^n,\ \ \ \ a_n=1.
		\end{equation*}
		Next, we will show that the Weyl spectrum of $\mathbb T_p$ satisfies
		\begin{equation*}
			\omega(\mathbb T_p)=\sigma(\mathbb T_p) \backslash\pi_{00}(\mathbb T_p),
		\end{equation*}
		where
		\begin{align*}
			\omega(\mathbb T_p) = \sigma_{\text{e}}(\mathbb T_p) \cup \{\lambda \in \mathbb{C}: \lambda\notin \sigma_{\text{e}}(\mathbb T_p)\  \ \text{and} \  \ \text{index}(\mathbb T_p - \lambda I) \neq 0\}.
		\end{align*}
		From Theorem \ref{lemGZ}, we conclude that $\sigma_{\mathrm e}(\T_p)=p(\partial\D)$ and $\T_{p-\lambda}$ is a Fredholm operator with index $0$ for every $\lambda\notin p(\partial\D)$. Thus, $$\omega(\T_p)=\sigma_{\mathrm{e}}(\T_p)=p(\partial\D).$$
		Noting that a Fredholm operator with Fredholm index zero is not invertible if and only
		if its kernel is nontrivial, the spectrum of $\T_p$ can be written as
		\begin{equation}\label{eq:spectrum}
			\sigma(\mathbb T_p)=p(\partial\D)\cup\bigl\{\lambda\in\C : \lambda \notin p(\partial\D)\  \ \text{and}\ \
			\lambda\in \sigma_{\mathrm{p}}(\mathbb T_p)\bigr\},
		\end{equation}
		where $\sigma_{\mathrm{p}}(\mathbb T_p)$ is the point spectrum of $\mathbb T_p$.
		
		Denote $$\Lambda_p:=\bigl\{\lambda\in\C :\lambda\notin  p(\partial\D)\  \ \text{and}\ \
		\lambda\in \sigma_{\mathrm{p}}(\mathbb T_p)\bigr\}.$$ From (\ref{eq:spectrum}), we conclude that the isolated points of $\sigma(\T_p)$ (if any) are contained in $\Lambda_p$, and hence we have that $\pi_{00}(\mathbb T_p)\subset\Lambda_p$. Therefore, to show Weyl's theorem holds for $\T_p$, we need only to show that $\Lambda_p$ is a finite set.

		If $\lambda\notin p(\partial\mathbb D)$ and $p-\lambda$ has a
		multiple zero $\gamma$, then
		\[
		p(\gamma)=\lambda
		\quad\text{and}\quad
		p'(\gamma)=0.
		\]
		Since the polynomial $p'$ has only finitely many zeros, every such $\lambda$ belongs to the finite set
		\[
		\mathcal C_p:=\{p(\gamma):p'(\gamma)=0\}.
		\]
		Therefore, there are only finitely many  $\lambda\in\C\backslash p(\partial\D)$ for which
		$p-\lambda$ has a multiple zero. It remains to consider those
		$\lambda\in\Lambda_p$ for which all the zeros of $p-\lambda$ are simple.

		Let $\lambda$ be a point in $\Lambda_p$ such that all the zeros of $p-\lambda$ are simple. Note that $\lambda\in \sigma_{\mathrm{p}}(\T_p)$ if and only if $0\in \sigma_{\mathrm{p}}(\T_{p-\lambda})$, which is equivalent to $0\in \sigma_{\mathrm{p}}(W\T_{p-\lambda}W^*)$. Here, $W$ is the unitary operator defined in Section \ref{S2}. Let $\psi(z):=p(z)-\lambda$. Then it follows from Lemma \ref{lemWZ}  that there exist $f\in zL_a^2$ and $g\in L_a^2$, not both zero, such that
		\begin{equation}\label{eq:block-system}
			\begin{cases}
				\psi(z)f(z)+P\bigl[\psi(z)g(\overline z)\bigr]
				-\langle g,\psi^*\rangle=0,\vspace{2mm}\\
				T_{\widehat\psi}g=0.
			\end{cases}
		\end{equation}
		We first deal with the second equation in (\ref{eq:block-system}). From the proof of \cite[Theorem 3.1]{WangZhao}, we have
		$$\mathrm{dim\ ker}(T_{\widehat{\psi}})=\mathrm{the \ number \ of \ zeros  \ of} \ \overline{\widehat{\psi}}\ \mathrm{in}\ \mathbb D.$$
		If $\psi$ has no zeros in the open disk $\mathbb D$, then it has no zeros in $\overline{\D}$, since $\lambda\notin p(\partial \D)$. Hence $\psi$ and $\frac{1}{\psi}$ belong to $H^\infty$. In this case,  Lemma \ref{lemWZ} tells us that $\T_\psi$ is invertible on $L_h^2$.  Hence $\lambda\notin\Lambda_p$, which is a contradiction.

In the sequel, we assume that $p-\lambda$ has $N$ distinct zeros in $\D$, denoted by $\alpha_1, \cdots, \alpha_N$, where $1\leqslant N\leqslant n$. Note that
		\[T_{\widehat\psi}K_{\overline{\alpha}_j}=\overline{\psi^*(\overline{\alpha}_j)}K_{\overline{\alpha}_j}=\psi(\alpha_j)K_{\overline{\alpha}_j}=0,\ \ \ \ j=1,2,\cdots, N.\]
		Since $\mathrm{dim\ ker}(T_{\widehat{\psi}})=N$ and reproducing kernels at distinct points are linearly independent, the solution for the second equation in (\ref{eq:block-system}) has the form:
		\begin{equation}\label{eq:g}
			g(z)=\sum_{j=1}^{N}c_jK_{\overline{\alpha}_j}(z)
			=\sum_{j=1}^{N}\frac{c_j}{(1-\alpha_jz)^2},
			\qquad c_j\in\C.
		\end{equation}
		It is easy to see that
		\[\langle K_{\overline{\alpha}_j},\psi^*\rangle=\overline{\psi^*(\overline{\alpha}_j)}=\psi(\alpha_j)=0,\]
		so we conclude that  $\langle g,\psi^*\rangle=0$. Thus the first equation in (\ref{eq:block-system}) becomes:
		$$\psi(z)f(z)+P\bigl[\psi(z)g(\overline z)\bigr]=0.$$
		
		Next, we are going to compute $P\bigl[\psi(z)g(\overline z)\bigr]$. Recall that the reproducing kernel for the Bergman space  $L_a^2$ can be expressed as
		\[K_z(w)=\frac{1}{(1-w\overline z)^2}=\sum_{\ell=0}^{\infty}(\ell+1)w^\ell\overline z^{\,\ell}.\]
		It follows from (\ref{eq:g}) that
		\begin{align*}
			P\bigl[\psi(z)g(\overline z)\bigr]
			=& P\Big[\psi(z)\sum_{j=1}^{N}\frac{c_j}{(1-\alpha_j\overline{z})^2}\Big]\\
			=& \sum_{j=1}^{N}c_jP\Big[\frac{p(z)-\lambda}{(1-\alpha_j\overline{z})^2}\Big].
		\end{align*}
Since $P$ is bounded on $L^2(\mathbb D, dA)$, we have that
		\begin{align*}
			P\left[\frac{z^k}{(1-w\overline z)^2}\right]&=\sum_{\ell=0}^{\infty}(\ell+1)w^\ell	P(z^k\overline z^{\,\ell})\\
			&=\frac{1}{k+1}\sum_{\ell=0}^{k}
			(\ell+1)(k-\ell+1)w^\ell z^{k-\ell},
		\end{align*}
		where the second equality is due to
		\begin{equation*}
			P(z^k\overline z^{\,\ell})=
			\begin{cases}
				\displaystyle\frac{k-\ell+1}{k+1}z^{k-\ell},&0\leqslant \ell\leqslant k,\\[6pt]
				0,&\ell>k.
			\end{cases}
		\end{equation*}
		Consequently, we obtain that
		\begin{align*}
			P\left[\frac{z^k-w^k}{(1-w\overline z)^2}\right]
			&=\frac{1}{k+1}\sum_{\ell=0}^{k-1}
			(\ell+1)(k-\ell+1)w^\ell z^{k-\ell}\\
			&=\frac{z}{k+1}\sum_{\ell=0}^{k-1}
			(\ell+1)(k-\ell+1)w^\ell z^{k-\ell-1}.
		\end{align*}
		Define the following  polynomial in two variables:
		\begin{equation}\label{eq:h}
			h(z,w):=
			\sum_{k=1}^{n}\frac{a_k}{k+1}
			\sum_{\ell=0}^{k-1}(\ell+1)(k-\ell+1)
			z^{k-\ell-1}w^\ell.
		\end{equation}
		Then the linearity of $P$ gives
		\begin{equation*}
			P\left[\frac{p(z)-p(w)}{(1-w\overline z)^2}\right]=zh(z,w),
			\qquad w\in\D.
		\end{equation*}
		Using $p(\alpha_j)=\lambda$, we obtain that
		\begin{align*}
			P\bigl[\psi(z)g(\overline z)\bigr]
			=\sum_{j=1}^{N}c_j
			P\left[\frac{p(z)-p(\alpha_j)}
			{(1-\alpha_j\overline z)^2}\right]=z\sum_{j=1}^{N}c_jh(z,\alpha_j).
		\end{align*}
		This yields that the solution of the first equation in (\ref{eq:block-system}) is given by
		\begin{equation}\label{eq:f}
			f(z)=-z\,
			\frac{\displaystyle\sum_{j=1}^{N}c_jh(z,\alpha_j)}
			{p(z)-\lambda}.
		\end{equation}
		Since $f\in zL_a^2$, we obtain that $\frac{f}{z}$ is analytic and square integrable on $\D$.
		
		Recall that \(N\) denotes the number of zeros of \(p-\lambda\) in \(\mathbb D\), and \(n\) denotes the degree of \(p\). If $N=n$, then the polynomial in the numerator of (\ref{eq:f}) has degree at most $(n-1)$ and vanishes at the $n$ distinct zeros of $p-\lambda$. Hence, we have
		$$\sum_{j=1}^{n}c_jh(z,\alpha_j)\equiv 0.$$
After straightforward calculations and rearrangements, the above system of equations can be written as
		\begin{equation*}
			\begin{cases}
				c_1+c_2+\cdots+c_n=0,\\
				c_1\alpha_1+c_2\alpha_2+\cdots+c_n\alpha_n=0,\\
				\cdots\cdots\\
				c_1\alpha_1^{n-1}+c_2\alpha_2^{n-1}+\cdots+c_n\alpha_n^{n-1}=0.
			\end{cases}
		\end{equation*}	
		It is clear that the determinant of the coefficient matrix of $\{c_1,c_2,\cdots,c_n\}$ is a Vandermonde determinant, which is equal to $$\prod\limits_{1\leqslant j<k\leqslant n}(\alpha_k-\alpha_j).$$
		Since $\alpha_k\neq \alpha_j$ for $k\neq j$, the coefficient matrix is invertible and hence $$c_1=c_2=\cdots=c_n=0.$$
		Thus $f=g=0$, which is a contradiction.

 From the preceding discussion, we find that it suffices to consider the case that $1\leqslant N\leqslant n-1$. In this case, the condition $f\in zL_a^2$ yields the following system of equations:
		\begin{equation*}\label{eq:linear-system}
			\sum_{j=1}^{N}c_jh(\alpha_i,\alpha_j)=0,  \ \ \ \ \ i=1, 2, \cdots, N,
		\end{equation*}
where $h$ was defined in (\ref{eq:h}).
		Therefore, $\lambda$ is an eigenvalue of $\T_p$ if and only if the above system admits a nontrivial solution, which is equivalent to the condition
		\begin{equation}\label{det}
			\det\bigl[h(\alpha_i,\alpha_j)\bigr]_{i,j=1}^{N}=0.
		\end{equation}
		Here,  $\bigl[h(\alpha_i,\alpha_j)\bigr]_{i,j=1}^{N}$ denotes the $N$-th order matrix whose elements are $h(\alpha_i,\alpha_j)$:
		$$\begin{bmatrix}
			h(\alpha_1,\alpha_1) & h(\alpha_1,\alpha_2) & \cdots & h(\alpha_1,\alpha_N) \\
			h(\alpha_2,\alpha_1) & h(\alpha_2,\alpha_2) & \cdots & h(\alpha_2,\alpha_N) \\
			\vdots & \vdots & \ddots & \vdots \\
			h(\alpha_N,\alpha_1) & h(\alpha_N,\alpha_2) & \cdots & h(\alpha_N,\alpha_N)
		\end{bmatrix}.$$
		Thus, to show there are finitely many values of $\lambda\in \Lambda_p$ for which $p-\lambda$ has only simple zeros,  it is enough to show that there are finitely many points in $\C\backslash p(\partial\D)$, still denoted by $\lambda$, that satisfy  $\lambda=p(\alpha_j)$ for $j\in\{1,\cdots,N\}$ and for which  (\ref{det}) holds.
		
		Since the degree of $p-\lambda$ is $n$, let
		$\beta_1,\cdots,\beta_n$ be all the zeros of $p-\lambda$ in the complex plane $\mathbb C$, and set
		\begin{equation}\label{FN}
			F_N(\beta_1,\cdots,\beta_n):=
			\prod_{\substack{A\subset\{1,\cdots,n\}\\ |A|=N}}
			\det\bigl[h(\beta_i,\beta_j)\bigr]_{i,j\in A},
		\end{equation}
		where $|A|$ denotes the cardinality of the set $A$. If (\ref{det}) holds, then one of the factors in (\ref{FN}) vanishes, and hence $$F_N(\beta_1,\cdots,\beta_n)=0.$$ It therefore suffices to show that $F_N(\beta_1,\cdots,\beta_n)$ is a polynomial in $\lambda$ that is not identically zero.  Observe that the expression on the right-hand side of (\ref{FN})
		is a symmetric polynomial in $\beta_1,\cdots,\beta_n$. Indeed, let $\tau$ be a permutation of $\{1,\cdots,n\}$. Then
		$$(\tau F_N)(\beta_1,\cdots,\beta_n)=F_N(\beta_{\tau(1)},\cdots,\beta_{\tau(n)}),$$
		that is,
		$$\tau\left(\prod_{\substack{A\subset\{1,\cdots,n\}\\ |A|=N}}
		\det\bigl[h(\beta_i,\beta_j)\bigr]_{i,j\in A}\right)=\prod_{\substack{A\subset\{1,\cdots,n\}\\ |A|=N}}
		\det\bigl[h(\beta_{\tau(i)},\beta_{\tau(j)})\bigr]_{i,j\in A}.$$
		Let $S_A$ denote the set of all permutations of $A$. Setting $j = \tau(i)$ and $\eta = \tau\sigma\tau^{-1}$, we have $\mathrm{sgn}(\eta) =\mathrm{sgn}(\sigma)$, and hence the definition of the determinant gives
		\begin{align*}
			\det\bigl[h(\beta_{\tau(i)},\beta_{\tau(j)})\bigr]_{i,j\in A}&=\sum\limits_{\sigma\in S_A}\mathrm{sgn}(\sigma)\prod\limits_{i\in A}h(\beta_{\tau(i)},\beta_{\tau\sigma(i)})\\
			&=\sum\limits_{\eta\in S_{\tau(A)}}\mathrm{sgn}(\eta)\prod\limits_{j\in \tau(A)}h(\beta_{j},\beta_{\eta(j)})\\
			&=\det\bigl[h(\beta_i,\beta_j)\bigr]_{i,j\in \tau(A)}.
		\end{align*}
		
		Therefore, we get that
		\begin{align*}
			(\tau F_N)(\beta_1,\cdots,\beta_n)&=\tau\left(\prod_{\substack{A\subset\{1,\cdots,n\}\\ |A|=N}}\det\bigl[h(\beta_i,\beta_j)\bigr]_{i,j\in A}\right)\\
			&=\prod_{\substack{A\subset\{1,\cdots,n\}\\ |A|=N}}\det\bigl[h(\beta_i,\beta_j)\bigr]_{i,j\in \tau(A)}\\
			&=\prod_{\substack{\tau(A)\subset\{1,\cdots,n\}\\ |\tau(A)|=N}}\det\bigl[h(\beta_i,\beta_j)\bigr]_{i,j\in \tau(A)}\\
			&=F_N(\beta_1,\cdots,\beta_n).
		\end{align*}
		Using the fundamental theorem of symmetric polynomials, we have that there exists a polynomial $Q_N$ such that
		\[
		F_N(\beta_1,\cdots,\beta_n)
		=
		Q_N\bigl(e_1(\beta_1,\cdots,\beta_n),\cdots,
		e_n(\beta_1,\cdots,\beta_n)\bigr),
		\]
		where $e_1,\cdots,e_n$ are the elementary symmetric polynomials given by
		\[
		e_1(\beta_1,\cdots,\beta_n)
		=
		\sum_{j=1}^n\beta_j,
		\qquad
		e_2(\beta_1,\cdots,\beta_n)
		=
		\sum_{1\leqslant i<j\leqslant n}\beta_i\beta_j,
		\]
		and, more generally,
		\[
		e_l(\beta_1,\cdots,\beta_n)
		=
		\sum_{1\leqslant i_1<\cdots<i_l\leqslant n}
		\beta_{i_1}\cdots\beta_{i_l},
		\qquad 1\leqslant l\leqslant n.
		\]
		In particular,
		\[
		e_n(\beta_1,\cdots,\beta_n)
		=
		\prod_{j=1}^n\beta_j.
		\]
		
		Since
		\[
		p(z)
		=
		z^n+a_{n-1}z^{n-1}+\cdots+a_1z+a_0
		\]
		and  $\beta_1,\cdots,\beta_n$ are the zeros of $p-\lambda$, we have
		\[
		p(z)-\lambda
		=
		z^n+a_{n-1}z^{n-1}+\cdots+a_1z+(a_0-\lambda)
		=
		\prod_{j=1}^n(z-\beta_j).
		\]
		By Vieta's theorem,
		\[
		e_l(\beta_1,\cdots,\beta_n)
		=
		(-1)^l a_{n-l},
		\qquad 1\leqslant l<n,
		\]
		and
		\[
		e_n(\beta_1,\cdots,\beta_n)
		=
		(-1)^n(a_0-\lambda).
		\]
		Thus, the first $(n-1)$ elementary symmetric polynomials are independent
		of $\lambda$, while the last one is a polynomial of degree $1$ in
		$\lambda$. Consequently,
		\[
		F_N(\beta_1,\cdots,\beta_n)
		=
		Q_N\bigl(
		-a_{n-1},
		a_{n-2},
		\cdots,
		(-1)^{n-1}a_1,
		(-1)^n(a_0-\lambda)
		\bigr):=G_N(\lambda).
		\]
		It follows that
		\begin{equation}\label{eq:Fm-polynomial}
			F_N(\beta_1,\cdots,\beta_n)=G_N(\lambda)\in\mathbb{C}[\lambda],
		\end{equation}
		i.e., $F_N(\beta_1,\cdots,\beta_n)$ is a polynomial in $\lambda$.
		
		Next, we will prove that $G_N$ is not identically zero. To do this, we are going to show that $$G_N(t^n)=F_N(\beta_{1,t},\cdots,\beta_{n,t})\neq 0$$ for any large enough $t$, where \(\beta_{1,t},\cdots,\beta_{n,t}\) denote the zeros of \(p(z)-t^n\).  Noting that $$\frac{p(t\zeta)-t^n}{t^n}
		=\frac{1}{t^n}\prod_{j=1}^n(t\zeta-\beta_{j,t})
		=\prod_{j=1}^n\left(\zeta-\frac{\beta_{j,t}}t\right),$$
		we obtain  that the zeros of \(\frac{p(t\zeta)-t^n}{t^n}\) are precisely given by $$\frac{\beta_{1,t}}t,\cdots,\frac{\beta_{n,t}}{t}.$$
		On the other hand,
		$$\frac{p(t\zeta)-t^n}{t^n}=\zeta^n-1+\sum_{k=0}^{n-1}a_kt^{k-n}\zeta^k.$$
		Clearly, $\zeta^n-1$ has $n$ simple zeros denoted by $\omega_1,\cdots,\omega_n$. Choose  an $\epsilon\in (0, 1)$ sufficiently small such that the disks
		$$ B(\omega_1,\epsilon),\cdots,B(\omega_n,\epsilon) $$
		are pairwise disjoint. For all sufficiently large $t$, we have that $$\Big|\frac{p(t\zeta)-t^n}{t^n}-(\zeta^n-1)\Big|\leqslant\sum\limits_{k=0}^{n-1}|a_k|t^{k-n}(1+\epsilon)^k<\sum\limits_{k=0}^{n-1}|a_k|t^{k-n}2^k<|\zeta^n-1|$$
		holds on each circle $\partial B(\omega_j,\epsilon)=\{\zeta\in\C: |\zeta-\omega_j|=\epsilon\}$, $j=1,\cdots,n$.
		According to Rouch\'e's theorem, the polynomial
		$\frac{p(t\zeta)-t^n}{t^n}$
		has exactly one zero in each disk $B(\omega_j,\epsilon)$ for all sufficiently large \(t\). It follows that
		\begin{equation}\label{eq:root-asymptotics}
			\left|\frac{\beta_{j,t}}{t}-\omega_j\right|<\epsilon\ \ \ \ (j=1,\cdots, n)
		\end{equation}
		for all sufficiently large $t$.
		
		From  (\ref{eq:h}), for $1\leqslant i,j\leqslant n$ we have
		$$h(\beta_{i,t},\beta_{j,t})=\sum_{k=1}^{n}\frac{a_k}{k+1}\sum_{\ell=0}^{k-1}(\ell+1)(k-\ell+1)\beta_{i,t}^{k-\ell-1}\beta_{j,t}^\ell.$$
		It follows that
		\begin{align}\label{eq:t^{1-n}h}
			\begin{split}
				t^{1-n}h(\beta_{i,t},\beta_{j,t}\bigr)&=
\sum\limits_{\ell=0}^{n-1}\frac{(\ell+1)(n-\ell+1)}{n+1}
\left(\frac{\beta_{i,t}}{t}\right)^{n-\ell-1}\left(\frac{\beta_{j,t}}{t}\right)^\ell\\
				&\qquad + \sum\limits_{k=1}^{n-1}\sum\limits_{\ell=0}^{k-1}\frac{a_k(\ell+1)(k-\ell+1)}{k+1}\left(\frac{\beta_{i,t}}{t}\right)^{k-\ell-1}\left(\frac{\beta_{j,t}}{t}\right)^\ell\frac{1}{t^{n-k}}.
			\end{split}		
		\end{align}
		Then we have by (\ref{eq:root-asymptotics}) that the second term in \eqref{eq:t^{1-n}h} tends to zero as $t$ tends to infinity. Set $$b_\ell:=\frac{(\ell+1)(n-\ell+1)}{n+1}, \ \ \ \ \ell=0, 1, \cdots, n-1.$$ Then
		\begin{equation}\label{eq:Mlimit}
			t^{1-n}h\bigl(\beta_{i,t},\beta_{j,t}\bigr)
			\rightarrow
			M_{ij}:=
			\sum_{\ell=0}^{n-1}
			b_\ell\omega_i^{n-\ell-1}\omega_j^\ell\ \ \ \ (t\to \infty).
		\end{equation}
		
		Fix $A=\{n_1,\cdots,n_N\}\subset\{1,\cdots,n\}$ with $|A|=N$.  Since
		$\omega_i^n=1$ and $|\omega_i|=1$, for every nonzero vector
		$x=(x_{n_1}, x_{n_2}, \cdots, x_{n_N})\in\C^N$ we have
		\begin{align}\label{positive}
\begin{split}
			\sum_{i,j\in A}\overline{x}_i\,\omega_iM_{ij}x_j
			&=\sum_{\ell=0}^{n-1}b_\ell\sum_{i,j\in A}\overline{x}_i\,\omega_i^{\,n-\ell}\omega_j^\ell x_j\\
			& =\sum_{\ell=0}^{n-1}b_\ell\sum_{i,j\in A}\overline{x}_i\,\overline{\omega}_i^{\,\ell}\omega_j^\ell x_j\\
			&=\sum_{\ell=0}^{n-1}b_\ell
			\bigg|\sum_{j\in A}x_j\omega_j^\ell\bigg|^2.
\end{split}
		\end{align}
		We claim that the last term  in (\ref{positive}) is  strictly positive. In fact, if all the sums vanished, then in
		particular
		$$\sum_{j\in A}x_j\omega_j^\ell=0 \ \ \ \   (\ell=0,\cdots, n-1),$$
since $b_{\ell}>0$ for all $\ell$.
Taking $\ell=0,\cdots,N-1$, the above equality can be regarded as a system of linear equations:
		$$
		\begin{bmatrix}
			1 & 1 & \cdots & 1\\
			\omega_{n_1} & \omega_{n_2} & \cdots & \omega_{n_N}\\
			\vdots & \vdots & & \vdots\\
			\omega_{n_1}^{N-1} & \omega_{n_2}^{N-1}
			& \cdots & \omega_{n_N}^{N-1}
		\end{bmatrix}
		\begin{bmatrix}
			x_{n_1}\\
			x_{n_2}\\
			\vdots\\
			x_{n_N}
		\end{bmatrix}
		=0.
		$$
		The coefficient matrix is a Vandermonde matrix, and its determinant satisfies that
		$$\prod_{1\leqslant r<s\leqslant N}(\omega_{n_s}-\omega_{n_r})\neq0,$$
		since $\omega_{n_1},\cdots,\omega_{n_N}$ are pairwise distinct. This yields that $x=0$, which is  a contradiction. Consequently,
		$$\sum_{i,j\in A}\overline{x}_i\,\omega_iM_{ij}x_j=\sum_{\ell=0}^{n-1}b_\ell\bigg|\sum_{j\in A}x_j\omega_j^\ell\bigg|^2>0$$
for every nonzero vector
		$x=(x_{n_1}, x_{n_2}, \cdots, x_{n_N})\in\C^N$.
		Thus, $[\omega_iM_{ij}]_{i,j\in A}$ is a positive definite matrix, and its determinant is strictly positive. Since this matrix is obtained from \([M_{ij}]_{i,j\in A}\) by multiplying its \(i\)-th row by \(\omega_i\) for each \(i\in A\), we have
		$$0<\det[\omega_iM_{ij}]_{i,j\in A}=\left(\prod_{i\in A}\omega_i\right)\det[M_{ij}]_{i,j\in A}.$$
		It follows that
		\begin{equation*}\label{eq:M-nonsingular}
			\det[M_{ij}]_{i,j\in A}\neq0.
		\end{equation*}
Combining this and \eqref{eq:Mlimit}, we conclude that $$\mathrm{det}\big[t^{1-n}h(\beta_{i,t},\beta_{j,t})\big]_{i, j\in A}\neq 0$$
for all sufficiently large $t$.
It follows from (\ref{FN}) and (\ref{eq:Fm-polynomial}) that $$G_N(t^n)=F_N(\beta_{1,t},\cdots,\beta_{n,t})\neq 0$$
when $t$ is large enough. This implies that $G_N$  is not identically zero, as desired.

In summary, we have proved that \(\Lambda_p\) is finite. Since \(p(\partial\mathbb D)\) is closed and disjoint from \(\Lambda_p\), each \(\lambda\in\Lambda_p\) is isolated in \(\sigma(\mathbb T_p)\) and is an eigenvalue of finite geometric multiplicity. Hence \(\Lambda_p\subset\pi_{00}(\mathbb T_p)\), and the reverse inclusion established above yields equality. Therefore, we arrive at
\[
\omega(\mathbb T_p)=\sigma(\mathbb T_p)\backslash\pi_{00}(\mathbb T_p),
\]
which implies that Weyl's theorem holds for $\T_p$.

To finish the proof, it remains to show that $$|\Lambda_p|\leqslant (n-1)2^{n-1}.$$
For this purpose, we  split the points of \(\Lambda_p\) into two classes.

First, consider those \(\lambda\in\Lambda_p\) for which \(p-\lambda\) has a multiple zero. If \(\gamma\) is such a multiple zero, then
\[
p(\gamma)=\lambda,\qquad p'(\gamma)=0.
\]
Since \(p'\) is a polynomial of degree $(n-1)$, it has at most $(n-1)$ distinct zeros. Hence, all such \(\lambda\) belong to the finite set
\[
\{p(\gamma):p'(\gamma)=0\},
\]
whose cardinality is at most $(n-1)$. Therefore this class contributes at most $(n-1)$ points to \(\Lambda_p\).

Now consider those \(\lambda\in\Lambda_p\) for which all zeros of \(p-\lambda\) are simple. As before, we use  \(N\) to denote the number of zeros of \(p-\lambda\) in \(\mathbb D\). From the preceding discussion, the cases \(N=0\) and \(N=n\) cannot occur. Thus, $
1\leqslant N\leqslant n-1$.
Fix such an \(N\). Let \(\lambda\) be a point of this type, and let
$\beta_1,\cdots,\beta_n$
be all the zeros of \(p-\lambda\). Since all these zeros are simple, they are pairwise distinct. Denote the \(N\) zeros lying in \(\mathbb D\)  by
$\alpha_1,\cdots,\alpha_N$.


We now estimate the degree of \(G_N\) defined in (\ref{eq:Fm-polynomial}). Since \(h(z,w)\) has total degree at most $(n-1)$, each determinant
$\det\bigl[h(\beta_i,\beta_j)\bigr]_{i,j\in A}$
has total degree at most \(N(n-1)\) in the variables \(\beta_1,\cdots,\beta_n\). Since \(F_N\) is the product of \(\binom nN\) such determinants, we get that
\[
\deg (F_N)
\leqslant
N(n-1)\binom nN
=
n(n-1)\binom{n-1}{N-1},
\]
where \(\binom kj\) denotes the binomial coefficient $\frac{k!}{j!(k-j)!}$.
Setting
\[
M_N:=(n-1)\binom{n-1}{N-1},
\]
we have
\[
\deg (F_N) \leqslant nM_N.
\]
If \(Q_N\) has degree \(d_N\) in the variable \(e_n\), then the term \(e_n^{d_N}\) contributes total degree at least \(nd_N\) in the variables \(\beta_1,\cdots,\beta_n\). Hence
\[
nd_N\leqslant \deg (F_N) \leqslant nM_N,
\]
so \(d_N\leqslant M_N\). Since only \(e_n=(-1)^n(a_0-\lambda)\) depends on \(\lambda\), it follows that
\[
\deg (G_N) \leqslant d_N \leqslant M_N.
\]
Therefore, \(G_N\) has at most \(M_N\) distinct zeros.

Thus, for fixed \(N\in\{1,\cdots,n-1\}\), the number of points \(\lambda\in\Lambda_p\) for which \(p-\lambda\) has only simple zeros and exactly \(N\) zeros in \(\mathbb D\) is at most
\[
M_N=(n-1)\binom{n-1}{N-1}.
\]
Combining this with the contribution from the multiple-zero case, we arrive at
\[
\begin{aligned}
|\Lambda_p|
&\leqslant
(n-1)+\sum_{N=1}^{n-1}(n-1)\binom{n-1}{N-1}\\
&=
(n-1)\left[1+\sum_{N=1}^{n-1}\binom{n-1}{N-1}\right]\\
&=
(n-1)\left[1+\sum_{k=0}^{n-2}\binom{n-1}{k}\right]\\
&=
(n-1)\bigl[1+(2^{n-1}-1)\bigr]\\
&=
(n-1)2^{n-1},
\end{aligned}
\]
which  completes the proof of Theorem \ref{thm:main}.
%
\end{proof}
\vspace{3mm}

\subsection*{Acknowledgment}
This work was supported by the National Natural Science Foundation of China
(12371125)  and  Chongqing Natural Science Foundation (CSTB2024NSCQ-MSX0177).

	\end{document}